\documentclass[11pt]{article}
\usepackage[T1]{fontenc}
\usepackage{amsfonts,amsmath,tikz}
\usepackage{epsfig}
\usepackage{latexsym}
\usepackage{color}
\usepackage{soul}  
\usepackage{epsf,amssymb}

\usepackage{amsthm}
\usepackage{yfonts}
\usepackage{hyperref}
\usepackage{ifthen}
\usepackage{enumitem}
\usepackage{subcaption}

\usepackage{float} 
\floatstyle{plaintop}
\restylefloat{table} 

\newtheorem{theorem}{Theorem}
\newtheorem{lemma}[theorem]{Lemma}

\newtheorem{conjecture}[theorem]{Conjecture}

\newtheorem{corollary}[theorem]{Corollary}
\newtheorem{proposition}[theorem]{Proposition}

\def\vertex(#1){\put(#1){\circle*{2}}}
\def\vertexo(#1){\put(#1){\circle{2}}}
\def\vert(#1){\put(#1){\circle*{1.5}}}
\def\verto(#1){\put(#1){\circle{1.5}}}
\def\lab(#1)#2{\put(#1){\makebox(0,0)[c]{#2}}}
\definecolor{darkgreen}{RGB}{0,100,0}

\newcommand{\gammakL}{\gamma_k^{\mathrm L}}

\tikzset{My Style/.style={draw, circle, fill=black,scale=0.5}} 

\tikzset{My Style2/.style={draw, rectangle, fill=white, minimum size=1.5mm}} 

\title{On $k$-limited domination: complexity and Sierpi\'nski graphs}

\author{
Dragana Bo\v zovi\'{c} \\
\small \it University of Maribor, FEECS, Koroška cesta 46, 2000 Maribor, Slovenia \\
\small \it University of Maribor, FNM, Koroška cesta 160, 2000 Maribor \\
\small \tt dragana.bozovic@um.si
}
\date{}

\begin{document}
\maketitle

\begin{abstract}
A dominating set $D$ of a graph $G$ is called $k$-limited if every vertex of $D$ has at most $k$ neighbors outside $D$. The minimum cardinality among all $k$-limited dominating sets of $G$ is the $k$-limited domination number, denoted by $\gamma_k^L(G)$.

In this paper, we prove that the \textsc{$1$-Limited Dominating Set} problem is $\mathsf{NP}$-complete, answering an open question posed in the literature. We further show that, for every positive integer $k$, the \textsc{$k$-Limited Dominating Set} problem is $\mathsf{NP}$-complete even when restricted to planar graphs. In addition, we study the $k$-limited domination number of Sierpi\'nski graphs. We determine the exact value of $\gamma_1^L(S(n,m))$ for all integers $n\ge 1$ and $m\ge 2$, and obtain the $k$-limited domination number for the planar family $S(n,3)$.

\end{abstract}

\noindent
{\bf MSC 2020:} 05C69, 05C76, 68Q17

\noindent
{\bf Keywords:} $k$-limited domination, NP-completeness, planar graphs, Sierpi\'nski graphs

\section{Introduction}

Domination is one of the central topics in graph theory and has been extensively studied over the past decades. Besides the classical domination number, numerous variants of domination have been introduced and investigated, including total domination, efficient domination, Roman domination, Italian domination, and many others.
For a comprehensive overview of domination in graphs, we refer the reader to the monographs \cite{HHH1, HHH2, HHS1}.


The concept of $k$-limited domination was introduced in \cite{skupni}. In that paper, the authors established sharp general bounds for $\gamma_k^L(G)$ and discussed their tightness. They also obtained exact values for several basic graph families, including paths, stars, complete graphs, and complete bipartite graphs. Furthermore, they related $k$-limited domination to $(1,t)$-domination and showed that, for every $d$-regular graph $G$ and every $k<d$, the equality $\gamma_k^L(G)=\gamma_{1,d-k}(G)$ holds. 
For $k=1$, they connected $1$-limited domination with the packing number and proved the Gallai-type inequality $\gamma_1^L(G)+\rho(G)\le |V(G)|$.

In \cite{sandra}, the author proved that the \textsc{$k$-Limited Dominating Set} problem is $\mathsf{NP}$-complete for every fixed integer $k\ge 2$. The paper also considered the $k$-limited domination number of Cartesian products of graphs, establishing general lower and upper bounds for $\gamma_k^L(G\square H)$ and deriving exact values for several families of graph products, including rook graphs, Cartesian products of $k$-coronas, grid graphs, and several families involving prisms and hypercubes. In addition, the author posed the question of whether the \textsc{$k$-Limited Dominating Set} problem is $\mathsf{NP}$-complete for $k=1$. In this paper, we provide an answer to this question.

Besides complexity questions, an important aspect of the study of $k$-limited domination is the determination of exact values of the parameter for specific graph families. In this context, we focus on Sierpi\'nski graphs, one of the most extensively studied recursively defined graph families. Sierpi\'nski graphs were introduced by Klavžar and Milutinović \cite{klavzarmilutinovic} in connection with a variant of the Tower of Hanoi problem. Several domination-related parameters have already been investigated on these graphs, including the domination number and efficient domination \cite{dominacijasierpinski}, Grundy domination \cite{grundysierpinski}, (double) Roman domination \cite{romansierpinski}, and (perfect) Italian domination \cite{italiansierpinski}. We study the $k$-limited domination number of Sierpi\'nski graphs.

The paper is organized as follows. Section~2 introduces the notation and terminology used throughout the paper. In Section~3, we prove that the $1$-limited dominating set problem is $\mathsf{NP}$-complete, while in Section~4 we show that the problem remains $\mathsf{NP}$-complete even when restricted to planar graphs. Section~5 is devoted to Sierpi\'nski graphs, where we determine the $1$-limited domination number of $S(n,m)$ and the $k$-limited domination number of the planar family $S(n,3)$. Finally, in Section~6, we present some concluding remarks and directions for future research.

\section{Preliminaries}

Throughout the paper, all graphs are finite, simple, and undirected. Let $G=(V(G),E(G))$ be a graph. For a vertex $v\in V(G)$, we denote by $N(v)$ its open neighborhood and by $N[v]=N(v)\cup\{v\}$ its closed neighborhood. For a positive integer $n$, we write $[n]=\{1,\ldots,n\}$.

A set $D\subseteq V(G)$ is a \emph{dominating set} of $G$ if every vertex of $V(G)-D$ has a neighbor in $D$. The minimum cardinality 
among all dominating sets of $G$ is the \emph{domination number} of $G$, denoted by $\gamma(G)$.

Let $k$ be a positive integer. A dominating set $D\subseteq V(G)$ is called a \emph{$k$-limited dominating set} if every vertex $u\in D$ satisfies $|N(u)-D|\le k.$
The minimum cardinality among all $k$-limited dominating sets of $G$
is the \emph{$k$-limited domination number} of $G$, denoted by $\gamma_k^L(G)$. A $k$-limited dominating set of cardinality $\gamma_k^L(G)$ is called a \emph{$\gamma_k^L(G)$-set}. For convenience, we refer to the condition $|N(u)-D|\le k$ as the \emph{$k$-limited constraint}.

We also recall the definition of Sierpi\'nski graphs. For integers $n\ge 1$ and $m\ge 2$, the vertex set of the Sierpi\'nski graph $S(n,m)$ is $V(S(n,m))=\{0,1,\ldots,m-1\}^n$.
Two distinct vertices $u=u_1u_2\dots u_n$ and $v=v_1v_2\dots v_n$
are adjacent if and only if there exists an index $h\in \{1,\ldots,n \}$ such that
\begin{enumerate}
\item $u_t=v_t$ for every $t<h$,
\item $u_h\neq v_h$,
\item $u_t=v_h$ and $v_t=u_h$ for every $t>h$.
\end{enumerate}
It is well known that $S(1,m)\cong K_m$ and that $S(n,m)$ consists of $m$ copies of $S(n-1,m)$, where every pair of copies is connected by exactly one \textit{linking edge}. We refer to each copy of $S(1,m)$ in $S(n,m)$ as a \emph{base clique}.

\section{Computational complexity of the $1$-limited dominating set problem}

In \cite{sandra}, the author posed the question whether the
\textsc{$1$-Limited Dominating Set} problem is $\mathsf{NP}$-complete. In this section, we show that this is indeed the case.

We prove that the \textsc{1-Limited Dominating Set} problem is $\mathsf{NP}$-complete by a reduction from the \textsc{Vertex Cover} problem, which is known to be $\mathsf{NP}$-complete \cite{karp}. 

\begin{center}
\fbox{\parbox{0.8\linewidth}{\noindent
\textbf{\textsc{$1$-Limited Dominating Set problem}}\\[.8ex]
\begin{tabular*}{.93\textwidth}{rl}
{\em Instance:} & A graph $G$ and a positive integer $\ell$.\\
{\em Question:} & Does $G$ admit a 1-limited dominating set of size at most $\ell$?
\end{tabular*}
}}
\end{center}

\begin{center}
\fbox{\parbox{0.8\linewidth}{\noindent
\textbf{\textsc{Vertex Cover problem}}\\[.8ex]
\begin{tabular*}{.93\textwidth}{rl}
{\em Instance:} & A graph $G$ and a positive integer $\ell$.\\
{\em Question:} & Does $G$ admit a vertex cover of size at most $\ell$?
\end{tabular*}
}}
\end{center}

The reduction is constructed as follows.
Let $G$ be a graph with vertex set $V(G)=\{ v_1, \dots, v_n\}$. The graph $G'$ is obtained from $G$ in the following way.
For every $v_iv_j\in E(G)$, replace the edge $v_iv_j$ by a copy of the graph $G^{ij}$, called an \textit{edge gadget}, with vertex set
$$V(G^{ij})=\{x_{ij}, y_{ij}, z_{ij}, a_{ij}, b_{ij}, c_{ij}\}$$
and edge set
$$E(G^{ij})=\{a_{ij}b_{ij}, a_{ij}c_{ij}, a_{ij}x_{ij}, a_{ij}y_{ij}, b_{ij}c_{ij}, b_{ij}y_{ij}, c_{ij}y_{ij}, c_{ij}z_{ij}, x_{ij}y_{ij}, y_{ij}z_{ij}\}.$$

Finally, for every $v_iv_j\in E(G)$, add edges $v_ia_{ij}$ and $c_{ij}v_j$, see Figure~\ref{edge_gadget}.

\begin{figure}[ht!]
\begin{center}
\begin{tikzpicture}[scale=0.6]

\node [My Style, name=vi, label=above:$v_i$ ]   at (-2,0) {};		
\node [My Style, name=x1, label=below:$a_{ij}$ ]   at (0,0) {};		
\node [My Style, name=x2, label=below:$b_{ij}$]   at (2,0) {};	
\node [My Style, name=x3, label=below:$c_{ij}$]    at (4,0) {};	
\node [My Style, name=y1, label=above:$x_{ij}$]   at (0,2) {};	
\node [My Style, name=y2, label=above:$y_{ij}$ ]   at (2,2) {};	
\node [My Style, name=y3, label=above:$z_{ij}$]   at (4,2) {};
\node [My Style, name=vj, label=above:$v_j$ ]   at (6,0) {};		
		
\draw[thick] (vi)--(x1)--(x2)--(x3)--(vj);
\draw[thick] (y1)--(y2)--(y3);
\draw[thick] (x1)--(y1);
\draw[thick] (x2)--(y2);
\draw[thick] (x3)--(y3);
\draw[thick] (x1)--(y2);
\draw[thick] (x3)--(y2);

\draw[thick, bend left] (x1) to (x3);

\draw[rounded corners=10pt, thick] (-1,-1.5) rectangle (5,3.5);
\node[font=\large\bfseries] at (2,4.2){$G^{ij}$};


\end{tikzpicture}
\end{center}
\caption{The edge gadget $G^{ij}$.}
\label{edge_gadget}
\end{figure}

The resulting graph $G'$ has order $n+6m$ and can clearly be constructed in polynomial time.\\

We begin with a lemma that will later be used in the proof.

\begin{lemma}\label{5ali6}
Let $D\subseteq V(G')$ be a $1$-limited dominating set of $G'$, $v_iv_j\in E(G)$, and let $D_{ij}=D\cap V(G^{ij})$. Then $|D_{ij}|\ge 5$. Furthermore, if $v_i,v_j\notin D$, then $|D_{ij}|\ge 6$.
\end{lemma}

\begin{proof}
Let $v_iv_j\in E(G)$ be fixed and let $D_{ij}=D\cap V(G^{ij})$.
We first show that $|D_{ij}|\ge 5$. Suppose, to the contrary, that $|D_{ij}|\le 4$. Then at least two vertices of $G^{ij}$ lie outside $D_{ij}$. Let $R=V(G^{ij})-D_{ij}$. Thus $|R|\ge 2$.

If $y_{ij}\notin R$, then $y_{ij}\in D_{ij}$, and since $y_{ij}$ is adjacent to every other vertex of $G^{ij}$, it has at least two neighbors in $R$. Hence, $y_{ij}$ violates the $1$-limited constraint. Therefore, $y_{ij}\in R$. 
Since $y_{ij}\in R$, we claim that $a_{ij}\in D_{ij}$. Suppose, to the contrary, that $a_{ij}\in R$. If $x_{ij}\in D_{ij}$, then $x_{ij}$ has two neighbors outside $D$, namely $a_{ij}$ and $y_{ij}$, contradicting the $1$-limited constraint. If $x_{ij}\notin D_{ij}$, then $x_{ij}$ is not dominated, since $N(x_{ij})=\{a_{ij},y_{ij}\}$. Thus $a_{ij}\in D_{ij}$.
Since $a_{ij}\in D_{ij}$ and $y_{ij}\in R$, all other neighbors of $a_{ij}$ in $G^{ij}$, namely $b_{ij}, c_{ij}, x_{ij}$, must belong to $D_{ij}$, otherwise $a_{ij}$ would have at least two neighbors outside $D$, again contradicting the $1$-limited constraint. 
Now $c_{ij}\in D_{ij}$ and $y_{ij}\in R$, meaning that $z_{ij}\in D_{ij}$, otherwise $c_{ij}$ would have two neighbors outside $D$. 
Consequently, $\{a_{ij},b_{ij},c_{ij},x_{ij},z_{ij}\}\subseteq D_{ij}$, and therefore $|D_{ij}|\ge 5$.

Suppose now that $v_i,v_j\notin D$. We show that $|D_{ij}|\ge 6$.
By the first part, $|D_{ij}|\ge 5$. Suppose, to the contrary, that
$|D_{ij}|=5$, and let $w$ be the unique vertex of $V(G^{ij})-D_{ij}$. If $w\in\{b_{ij},x_{ij},y_{ij},c_{ij}\}$, then necessarily $a_{ij}\in D_{ij}$, and $a_{ij}$ has two neighbors outside $D$, namely $v_i$ and $w$, contradicting the $1$-limited constraint. Similarly, if $w\in\{a_{ij},b_{ij},y_{ij},z_{ij}\}$, then necessarily $c_{ij}\in D_{ij}$, 
and $c_{ij}$ has two neighbors outside $D$, namely $v_j$ and $w$, a contradiction again. Therefore, the case $|D_{ij}|=5$ is impossible. Since $|D_{ij}|\ge 5$, it follows that
$|D_{ij}|\ge 6$, and the proof is complete.
\end{proof}

We are now ready to prove the main proposition of the reduction.
\begin{proposition}
Let $G$ be a graph of order $n$, and let $G'$. Then $G$ has a vertex cover of size at most $\ell$ if and only if $G'$ has a $1$-limited dominating set of size at most $\ell +5m$.
\end{proposition}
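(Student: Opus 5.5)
The plan is to prove the two implications separately. The forward direction needs an explicit choice of five vertices in every edge gadget. The backward direction is a counting argument resting on Lemma~\ref{5ali6}. Throughout, $m=|E(G)|$.

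\emph{($\Rightarrow$)} Let $C$ be a vertex cover of $G$ with $|C|\le \ell$. Put $D=C\cup\bigcup_{v_iv_j\in E(G)} D_{ij}$, where $D_{ij}$ has five vertices chosen according to which endpoints lie in $C$:
\begin{itemize}
\item if $v_i,v_j\in C$, take $D_{ij}=\{a_{ij},b_{ij},c_{ij},x_{ij},z_{ij}\}$, so $y_{ij}$ is left out;
\item if $v_i\notin C$ (hence $v_j\in C$), take $D_{ij}=\{a_{ij},b_{ij},c_{ij},x_{ij},y_{ij}\}$, so $z_{ij}$ is left out;
\item if $v_j\notin C$ (hence $v_i\in C$), take $D_{ij}=\{a_{ij},b_{ij},c_{ij},y_{ij},z_{ij}\}$, so $x_{ij}$ is left out.
\end{itemize}
The key design decision is that $a_{ij}$ and $c_{ij}$ always belong to $D$. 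Every neighbor of an original vertex $v\in C$ is an $a$- or $c$-vertex of some gadget, so $v$ then has no neighbor outside $D$, regardless of its degree in $G$.

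It remains to check the gadget vertices, case by case.
\begin{itemize}
\item In the first case, every gadget vertex in $D$ has exactly one outside neighbor, namely $y_{ij}$, because $v_i,v_j\in D$.
\item In the second case, $a_{ij}$ has only $v_i$ outside $D$. The vertices $c_{ij}$ and $y_{ij}$ have only $z_{ij}$ outside, and $b_{ij},x_{ij}$ have none.
\item The third case is symmetric, with outside neighbors $v_j$ for $c_{ij}$ and $x_{ij}$ for $a_{ij}$ and $y_{ij}$.
\end{itemize}
Domination is also clear. The excluded gadget vertex has a neighbor in $D$. Every $v\notin C$ is adjacent to the vertex $a_{ij}$ or $c_{ij}$ of any gadget incident to it, and that vertex is in $D$; an isolated vertex of $G$ can simply be added to $C$, or assumed absent. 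This gives $|D|\le \ell+5m$.

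\emph{($\Leftarrow$)} Let $D$ be a $1$-limited dominating set of $G'$ with $|D|\le\ell+5m$. Call an edge $v_iv_j$ \emph{bad} if $v_i,v_j\notin D$, and let $B$ be the set of bad edges. Define $C=(D\cap V(G))\cup\{v_i : v_iv_j\in B\}$, that is, add one endpoint of each bad edge. Then $C$ is a vertex cover of $G$. The gadgets are pairwise disjoint and disjoint from $V(G)$, so Lemma~\ref{5ali6} gives
$$|D|\ge |D\cap V(G)|+5m+|B|.$$
Hence $|C|\le |D\cap V(G)|+|B|\le |D|-5m\le \ell$.

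I expect the main obstacle to be the forward construction. A naive choice, such as always omitting $y_{ij}$, violates the constraint at $a_{ij}$ or $c_{ij}$ whenever an endpoint is not in $C$. This is what forces the three-case choice above; once it is fixed, the remaining checks are routine.
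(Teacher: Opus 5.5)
Your proof is correct and follows the paper's argument. You use the same gadget-by-gadget choice of five vertices: when both endpoints lie in $C$ you leave out $y_{ij}$, whereas the paper keeps $y_{ij}$ and leaves out one of $x_{ij},z_{ij}$; both choices work. You also use the same counting via Lemma~\ref{5ali6}, with your bad edges playing the role of the paper's gadgets with $|D_{ij}|\ge 6$.

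On isolated vertices, only your option of assuming them absent is valid, since adding them to $C$ could break $|C|\le\ell$. The paper tacitly makes the same assumption.
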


\begin{proof}
Let $C\subseteq V(G)$ be a vertex cover of $G$ with $|C|\le \ell$. We construct a set $D\subseteq V(G')$ as follows. First, put all vertices of $C$ into $D$. For every edge $v_iv_j\in E(G)$, add the vertices $a_{ij}, b_{ij}, c_{ij}, y_{ij}$ to $D$. Since $C$ is a vertex cover of $G$, at least one of $v_i$ and $v_j$ belong to $C$. 
If $v_i\in C$ and $v_j\notin C$, add $z_{ij}$ to $D$. If
$v_i\notin C$ and $v_j\in C$, add $x_{ij}$ to $D$.
If both $v_i$ and $v_j$ belong to $C$, add exactly one of $x_{ij}$ and $z_{ij}$ to $D$.
Thus, exactly five vertices are chosen from each edge gadget, and hence $|D|=|C|+5m\le \ell +5m$. One easily verifies that $D$ is a dominating set of $G'$ and that every vertex of $D$ has at most one neighbor outside $D$.
Therefore, $D$ is a $1$-limited dominating set of $G'$ of size at most $\ell+5m$.

Conversely, let $D$ be a $1$-limited dominating set of $G'$ with $|D|\le \ell+5m$. For every edge $v_iv_j\in E(G)$, let $D_{ij}=D\cap V(G^{ij})$. By Lemma~\ref{5ali6}, it holds that $|D_{ij}|\ge 5$. Let $C'=\{v_i \; ; \; v_iv_j\in E(G) \text{ and } |D_{ij}|\ge 6\}$ and define $C:=(D\cap V(G))\cup C'$. That is, for every edge gadget containing at least six vertices of $D$, we
add one endpoint of the corresponding edge to $C$. We claim that $C$ is a vertex cover of $G$. Let $v_iv_j\in E(G)$. If $v_i$ or $v_j$ belongs to $D\cap V(G)$, then $v_iv_j$ is covered by $C$. Otherwise, $v_i,v_j \notin D\cap V(G)$, and by Lemma~\ref{5ali6}, we have $|D_{ij}|\ge 6$. Hence, $v_i\in C'\subseteq C$, and so $v_iv_j$ is again covered by $C$. Therefore, $C$ is a vertex cover of $G$. Finally, $$|C|\le |D\cap V(G)|+\sum_{v_iv_j\in E(G)}(|D_{ij}|-5)=|D|-5m\le \ell.$$
Thus $G$ has a vertex cover of size at most $\ell$, completing the proof.
\end{proof}

The \textsc{1-Limited Dominating Set} problem belongs to $\mathsf{NP}$, since for a given set $D$ we can verify in polynomial time that $D$ is a dominating set and that it satisfies the $1$-limited constraint.
Since the \textsc{Vertex Cover} problem is $\mathsf{NP}$-complete, this yields the following theorem.

\begin{theorem} 
\label{complex}
The \textsc{$1$-Limited Dominating Set} problem is $\mathsf{NP}$-complete.
\end{theorem}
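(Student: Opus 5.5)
The theorem follows by combining membership in $\mathsf{NP}$ with the reduction already set up in this section. For membership, the certificate is a set $D\subseteq V(G)$ with $|D|\le \ell$. A verifier checks in $O(|V(G)|+|E(G)|)$ time that every vertex outside $D$ has a neighbor in $D$, and that every $u\in D$ satisfies $|N(u)-D|\le 1$. Both checks only require scanning adjacency lists, so \textsc{$1$-Limited Dominating Set} belongs to $\mathsf{NP}$.

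For hardness, I reduce from \textsc{Vertex Cover}, which is $\mathsf{NP}$-complete by \cite{karp}. Given an instance $(G,\ell)$ with $|V(G)|=n$ and $|E(G)|=m$, I build $G'$ by replacing every edge $v_iv_j$ with the edge gadget $G^{ij}$ and joining $v_i$ to $a_{ij}$ and $c_{ij}$ to $v_j$. The output instance is $(G',\ell+5m)$. Each gadget has six vertices and a constant number of edges, so $G'$ has order $n+6m$ and size $O(m)$, and the map $(G,\ell)\mapsto(G',\ell+5m)$ is computable in polynomial time.

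Correctness is exactly the preceding proposition: $G$ has a vertex cover of size at most $\ell$ if and only if $G'$ has a $1$-limited dominating set of size at most $\ell+5m$. That proposition in turn rests on Lemma~\ref{5ali6}. The lemma says every gadget contributes at least five vertices to $D$, and at least six whenever both endpoints lie outside $D$. This lets us charge each gadget's surplus to one endpoint of its edge.

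The only step needing real care is the forward direction, the routine check that the constructed $D$ is $1$-limited. I would verify it gadget by gadget in the three cases. If $v_i\in C$ and $v_j\notin C$, the gadget set is $\{a,b,c,y,z\}$. Then $a$'s only neighbor outside $D$ is $x$, while $b$, $y$ and $z$ have none. For $c$ the outside neighbor is $v_j$, so the unique outside neighbor of $c$ is $v_j$. The case $v_i\notin C$, $v_j\in C$ is symmetric. When both endpoints are in $C$, choosing $x$ leaves only $z$ outside, adjacent to $c$ and $y$, each of which has exactly one outside neighbor. Finally, each vertex $v\in C$ has all its gadget neighbors $a_{ij}$ or $c_{ij}$ in $D$, so it has no outside neighbors, and each $v\notin C$ is dominated by the $a$ or $c$ of any incident gadget. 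Vertices of $G$ that are isolated would need to be added to $D$, but they can be handled by deleting them from the \textsc{Vertex Cover} instance beforehand. With these checks the reduction is valid, and the theorem follows.
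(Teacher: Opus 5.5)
Your proposal is correct and follows the paper's route exactly: membership in $\mathsf{NP}$ by polynomial-time verification, and the reduction from \textsc{Vertex Cover} with the same edge gadget, budget $\ell+5m$, and Lemma~\ref{5ali6}. Your remark about isolated vertices is a genuine repair rather than a formality: in the paper's forward construction, an isolated vertex of $G$ outside $C$ is isolated in $G'$ and left undominated, so the proposition as stated needs $G$ to have no isolated vertices. One harmless slip in your case check is that, when $v_i\in C$ and $v_j\notin C$, the vertex $y_{ij}$ is adjacent to $x_{ij}\notin D$, so it has exactly one neighbor outside $D$ rather than none, which still satisfies the $1$-limited constraint.
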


Furthermore, the reduction preserves planarity. If $G$ is planar, then so is $G'$, since every edge of $G$ is replaced by the gadget shown in Figure~\ref{edge_gadget}, which admits a planar embedding. Since the \textsc{Vertex Cover} problem remains $\mathsf{NP}$-complete even when restricted to planar graphs~\cite{gareyjohnson}, we obtain the following corollary.

\begin{corollary}\label{cor:NP-planar-k1}
The \textsc{$1$-Limited Dominating Set} problem is $\mathsf{NP}$-complete even when restricted to planar graphs.
\end{corollary}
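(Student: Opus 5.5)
The corollary follows from the reduction given before Theorem~\ref{complex}, restricted to planar inputs. I will start from \textsc{Vertex Cover} on planar graphs, which is $\mathsf{NP}$-complete by \cite{gareyjohnson}. Given a planar instance $(G,\ell)$, I build $G'$ exactly as before and ask whether $G'$ has a $1$-limited dominating set of size at most $\ell+5m$. The preceding proposition already shows that the two instances are equivalent. Membership in $\mathsf{NP}$ is inherited from the general problem, since the certificate check does not depend on planarity. So the only thing left to prove is that $G'$ is planar whenever $G$ is.

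For planarity, I fix a planar embedding of $G$. Each edge $v_iv_j$ is drawn as a Jordan arc, and arcs of distinct edges meet only at common endpoints. Around each arc I take a thin strip, that is, a closed neighbourhood of the arc minus small discs around $v_i$ and $v_j$. I choose these strips pairwise disjoint and disjoint from all other vertices; this is possible because the arcs are internally disjoint compact sets.

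Inside the strip for $v_iv_j$, I draw $G^{ij}$ using the layout of Figure~\ref{edge_gadget}. The path $a_{ij}b_{ij}c_{ij}$ runs along the arc and the triangle-rich part lies to one side of it. The two vertices joined to the outside, $a_{ij}$ and $c_{ij}$, must lie on the outer face of this drawing, and the connecting edges $v_ia_{ij}$ and $c_{ij}v_j$ run along the arc to the endpoints.

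The step that needs care is checking that Figure~\ref{edge_gadget} really is a plane drawing with $a_{ij}$ and $c_{ij}$ on the outer face. The edges $a_{ij}y_{ij}$ and $c_{ij}y_{ij}$ sit inside the quadrilaterals $a_{ij}x_{ij}y_{ij}b_{ij}$ and $b_{ij}y_{ij}z_{ij}c_{ij}$ respectively. The edge $a_{ij}c_{ij}$ is routed below the path $a_{ij}b_{ij}c_{ij}$. Then the boundary of the outer face is $a_{ij}x_{ij}y_{ij}z_{ij}c_{ij}$ together with the edge $a_{ij}c_{ij}$, so both attachment vertices are on it. Since every gadget lies in its own strip, no two edges of $G'$ cross. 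Hence $G'$ is planar, and combining this with the proposition gives the polynomial reduction from planar \textsc{Vertex Cover} that proves the corollary.
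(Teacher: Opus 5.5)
Your proposal is correct and follows the paper's route: restrict the reduction to planar \textsc{Vertex Cover} instances and check that $G'$ stays planar. The paper only asserts that the gadget ``admits a planar embedding''; you justify it with the strip argument and an explicit plane drawing of $G^{ij}$, and routing $a_{ij}c_{ij}$ below the path $a_{ij}b_{ij}c_{ij}$ is the right choice, since Figure~\ref{edge_gadget} bends that edge upward across $b_{ij}y_{ij}$ and so is not itself a plane drawing.
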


\section{Computational complexity of the $k$-limited dominating set problem on planar graphs}

In the previous section we proved that the \textsc{$1$-Limited Dominating Set} problem is $\mathsf{NP}$-complete even when restricted to planar graphs. We now consider the case $k\ge 2$.

It is known that the \textsc{$k$-Limited Dominating Set} problem is $\mathsf{NP}$-complete for every fixed integer $k\ge2$~\cite{sandra}. Moreover, it is straightforward from the construction in \cite{sandra} to verify that, for $2\le k\le 4$, the reduction also preserves planarity. Together with Corollary~\ref{cor:NP-planar-k1}, this yields the following corollary.

\begin{corollary}\label{cor:NP-planar-k234}
For every $1\le k \le 4$, the \textsc{$k$-Limited Dominating Set} problem is $\mathsf{NP}$-complete even when restricted to planar graphs.
\end{corollary}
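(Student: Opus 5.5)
The corollary combines two ingredients, and I would prove it by handling the two ranges of $k$ separately. For $k=1$ there is nothing new to do: Corollary~\ref{cor:NP-planar-k1} already states that the \textsc{$1$-Limited Dominating Set} problem is $\mathsf{NP}$-complete on planar graphs. For every $k$, membership in $\mathsf{NP}$ is immediate, exactly as argued before Theorem~\ref{complex}. Given a candidate set $D$, we check in polynomial time that it is dominating, that $|D|\le \ell$, and that $|N(u)-D|\le k$ holds for every $u\in D$. So the only real work is $\mathsf{NP}$-hardness on planar inputs for $k\in\{2,3,4\}$.

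For $2\le k\le 4$ I would use the polynomial reduction from \cite{sandra}, which shows $\mathsf{NP}$-hardness of \textsc{$k$-Limited Dominating Set} for every fixed $k\ge 2$. I would then verify two things. First, the source problem of that reduction stays $\mathsf{NP}$-complete on planar instances; for example, \textsc{Dominating Set} or \textsc{Vertex Cover} on planar graphs (cf.~\cite{gareyjohnson}). Second, the construction maps planar instances to planar graphs.

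The planarity check should go as follows. The construction in \cite{sandra} attaches local gadgets to vertices and/or edges of the input graph. The gadgets are pendant structures forcing vertices into the solution, or small graphs replacing edges, analogous to $G^{ij}$ here. To argue planarity, I would fix a planar embedding of the input graph $G$ and insert each gadget inside a face incident to its attachment vertex or along its edge. This requires each gadget, with its attachment vertices placed on its outer face, to have a planar embedding. Then the union embeds without crossings, just as for $G'$ above. The correctness of the reduction itself (the equivalence of yes-instances) is inherited unchanged from \cite{sandra}, because restricting the input class does not affect that argument.

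The main obstacle is the planarity check for $k$ between $2$ and $4$. The gadgets presumably grow with $k$; for instance, a vertex may need $k$ extra private neighbours, or a $K_{k+1}$- or $K_{k+2}$-like structure may be used to force the $k$-limited constraint to bind. This is precisely why the statement stops at $k=4$: a clique-like component on at most $5$ vertices, or a $K_{2,k+1}$-like attachment, is planar only for small $k$. I would first inspect the gadget explicitly for $k=2,3,4$. I would confirm that it contains no $K_5$ or $K_{3,3}$ subdivision and that its attachment points lie on a common face. If a gadget is not directly planar, I would try to re-embed it with the attachment vertices on the outer face, or replace a non-planar clique by a planar subgraph with the same forcing behaviour. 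Combining the resulting planar $\mathsf{NP}$-completeness for $k=2,3,4$ with Corollary~\ref{cor:NP-planar-k1} for $k=1$ gives the corollary.
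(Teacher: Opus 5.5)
Your proposal follows the paper's argument: $k=1$ is Corollary~\ref{cor:NP-planar-k1}, and for $2\le k\le 4$ the paper simply observes that the reduction of \cite{sandra} preserves planarity. You also make explicit a point the paper leaves implicit, namely that the source problem of that reduction must remain $\mathsf{NP}$-hard on planar inputs. One aside: your guess about why the cutoff is at $k=4$ is inaccurate, since $K_{2,t}$ is planar for every $t$ and a clique on $5$ vertices is not, but this speculation plays no role in the actual argument.
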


It therefore remains to establish the hardness result for $k\ge 5$.
To this end, we reduce from the \textsc{Dominating Set} problem restricted to planar graphs of maximum degree at most $3$, which is known to be $\mathsf{NP}$-complete \cite{gareyjohnson-knjiga}. Let $G$ be such a graph with vertex set $V(G)=\{ v_1, \dots, v_n\}$ and let $\ell$ be a positive integer.

\begin{center}
\fbox{\parbox{0.8\linewidth}{\noindent
\textbf{\textsc{$k$-Limited Dominating Set problem}}\\[.8ex]
\begin{tabular*}{.93\textwidth}{rl}
{\em Instance:} & A graph $G$ and a positive integer $\ell$.\\
{\em Question:} & Does $G$ admit a $k$-limited dominating set of size at most $\ell$?
\end{tabular*}
}}
\end{center}

\begin{center}
\fbox{\parbox{0.8\linewidth}{\noindent
\textbf{\textsc{Dominating Set problem}}\\[.8ex]
\begin{tabular*}{.93\textwidth}{rl}
{\em Instance:} & A planar graph $G$ of maximum degree at most $3$ and a \\& positive integer $\ell$.\\
{\em Question:} & Does $G$ admit a dominating set of size at most $\ell$?
\end{tabular*}
}}
\end{center}

We construct a new graph $G'$ as follows.
For every $v\in V(G)$ and every $r\in[k-3]$, take a copy of the graph $H_v^r$, with vertex set
$$V(H_v^r)=\{p_{v,r},q_{v,r},s_{v,r}^1, \dots, s_{v,r}^{k-1} \}$$
and edge set
$$E(H_v^r)=\{p_{v,r}q_{v,r} \} \cup \{q_{v,r}s_{v,r}^i \, ; \; i\in [k-1] \}.$$
Finally, for every $v\in V(G)$ add edges $vp_{v,r}$ for $r\in[k-3]$, see Figure~\ref{vertex_gadget}.

\begin{figure}[ht!]
\begin{center}
\begin{tikzpicture}[scale=0.6]

\node [My Style, name=v, label=below:$v$ ]   at (0,-0.5) {};		
\node [My Style, name=p1, label=left:$p_{v,1}$]   at (-1.5,1) {};
\node [My Style, name=pz, label=right:$p_{v,{k-3}}$]    at (1.5,1) {};	

\node [My Style, name=q1, label=left:$q_{v,1}$]   at (-3,2.5) {};	
\node at (0,2.5) {$\dots$};
\node [My Style, name=qz, label=right:$q_{v,{k-3}}$]   at (3,2.5) {};

\draw[thick] (v)--(p1);
\draw[thick] (v)--(pz);
\draw[thick] (q1)--(p1);
\draw[thick] (qz)--(pz);

\node [My Style, name=s11, label=above:$s_{v,1}^1$]   at (-4,4) {};
\node at (-3,4) {$\dots$};
\node [My Style, name=s1z, label=above:$s_{v,1}^{k-1}$]   at (-2,4) {};
\draw[thick] (s11)--(q1);
\draw[thick] (s1z)--(q1);

\node [My Style, name=sz1, label=above:$s_{v,{k-3}}^1$]   at (2,4) {};
\node at (3,4) {$\dots$};
\node [My Style, name=szz, label=above:$s_{v,{k-3}}^{k-1}$]   at (4,4) {};
\draw[thick] (sz1)--(qz);
\draw[thick] (szz)--(qz);



\end{tikzpicture}
\end{center}
\caption{Vertex $v\in V(G)$ with $k-3$ pairwise disjoint copies of the graph $H_v^r$.}
\label{vertex_gadget}
\end{figure}

The resulting graph $G'$ has order $n+(k+1)(k-3)n$ and can clearly be constructed in polynomial time. Since each graph $H_v^r$ is attached locally to a single vertex of $G$, and since all these graphs are trees, the graph $G'$ is planar whenever $G$ is planar.

In the following lemma, we first establish some basic properties of $k$-limited dominating sets in the graphs $H_v^r$. 

\begin{lemma}\label{1alivec}
Let $k\ge 5$, let $H_v^r$ be one of the graphs attached to a vertex $v\in V(G)$, and let $D$ be a $k$-limited dominating set of $G'$. Then $|D\cap V(H_v^r)|\ge 1$. Moreover, if $|D\cap V(H_v^r)|=1$, then $D\cap V(H_v^r)=\{ q_{v,r}\}$. Furthermore, if $p_{v,r}\in D$, then $|D\cap V(H_v^r)|\ge 2$.
\end{lemma}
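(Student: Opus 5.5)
The plan is a local case analysis inside $H_v^r$. The key facts are that the leaves $s_{v,r}^1,\dots,s_{v,r}^{k-1}$ have $q_{v,r}$ as their only neighbor, and that $q_{v,r}$ has degree exactly $k$, with neighbors $p_{v,r},s_{v,r}^1,\dots,s_{v,r}^{k-1}$.

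\emph{Nonemptiness.} Consider the leaf $s_{v,r}^1$. It must either lie in $D$ or be dominated by $q_{v,r}$, its unique neighbor. In either case $D\cap V(H_v^r)\neq\emptyset$, which gives $|D\cap V(H_v^r)|\ge 1$.

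\emph{Characterizing the size-one case.} Suppose $D\cap V(H_v^r)=\{w\}$.
\begin{itemize}
\item If $w=s_{v,r}^i$, then $q_{v,r}\notin D$. Since $k-1\ge 4\ge 2$, there is another leaf $s_{v,r}^j$ with $j\neq i$. This leaf is not in $D$ and its only neighbor $q_{v,r}$ is not in $D$, so it is undominated, a contradiction.
\item If $w=p_{v,r}$, then $q_{v,r}\notin D$, and again every leaf is undominated, a contradiction.
\end{itemize}
Hence $w=q_{v,r}$. For completeness, this is indeed consistent with the $k$-limited constraint, since $q_{v,r}$ has exactly $k$ neighbors and all of them lie outside $D$.

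\emph{The case $p_{v,r}\in D$.} Suppose $p_{v,r}\in D$ and, for contradiction, $|D\cap V(H_v^r)|=1$. By the previous step, $D\cap V(H_v^r)$ would have to equal $\{q_{v,r}\}$, which does not contain $p_{v,r}$. This is a contradiction, so $|D\cap V(H_v^r)|\ge 2$.

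The argument only uses domination of the leaves. The hypothesis $k\ge 5$ enters solely to ensure there are at least two leaves, and the limited constraint is not needed at all. There is no real obstacle here; the only care point is the second leaf in the first sub-case above.
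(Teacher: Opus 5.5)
Your proposal is correct and follows essentially the same argument as the paper. Domination of the leaves $s_{v,r}^i$, whose only neighbor is $q_{v,r}$, forces a $D$-vertex into $H_v^r$ and rules out $p_{v,r}$ or a single leaf as the lone $D$-vertex; the third claim then follows from the second, and you are right that the $k$-limited constraint is never used, with $k\ge 5$ serving only to guarantee at least two leaves.
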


\begin{proof}
Let $H_v^r$ be fixed. First suppose that $D\cap V(H_v^r)=\emptyset$. Then none of the vertices $s_{v,r}^i$ belong to $D$. Since each $s_{v,r}^i$ is adjacent only to $q_{v,r}\notin D$, the vertices $s_{v,r}^i$ are not dominated, a contradiction. Hence $|D\cap V(H_v^r)|\ge 1$. 

Now suppose that $|D\cap V(H_v^r)|=1$. If the unique vertex in $D\cap V(H_v^r)$ was $p_{v,r}$, then none of the vertices $s_{v,r}^i$ would be dominated. If the unique vertex in $D\cap V(H_v^r)$ was any of the vertices $s_{v,r}^i$, say $s_{v,r}^j$,  then all the other $s_{v,r}^i$, where $i\neq j$, would not be dominated, since they are adjacent only to $q_{v,r}\notin D$. Therefore, $D\cap V(H_v^r)=\{ q_{v,r}\}$. 

Finally, suppose that $p_{v,r}\in D$. If $|D\cap V(H_v^r)|=1$, then the unique vertex in $D\cap V(H_v^r)$ is $p_{v,r}$, a contradiction. Hence, $|D\cap V(H_v^r)|\ge 2$.
\end{proof}

We are now ready to prove the main proposition of the reduction.

\begin{proposition}\label{NP-ravninski}
Let $k\ge 5$, let $G$ be a planar graph of order $n$ with maximum degree at most $3$. Then $G$ has a dominating set of size at most $\ell$ if and only if $G'$ has a $k$-limited dominating set of size at most $\ell +(k-3)n$.
\end{proposition}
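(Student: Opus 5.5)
The plan is to prove the two directions separately, using Lemma~\ref{1alivec} to control how many vertices a $k$-limited dominating set can place in each pendant tree $H_v^r$.

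\textbf{Forward direction.} Let $S$ be a dominating set of $G$ with $|S|\le \ell$, and set
$$D=S\cup\{q_{v,r}\,;\; v\in V(G),\ r\in[k-3]\}.$$
Then $|D|\le \ell+(k-3)n$. We check domination and the $k$-limited constraint.
\begin{itemize}
\item Each $s_{v,r}^i$ and each $p_{v,r}$ is dominated by $q_{v,r}$.
\item Each $v\in V(G)$ is dominated by $S$ inside $G$.
\item Each $q_{v,r}$ has exactly $k$ neighbours outside $D$, namely $p_{v,r}$ and the $k-1$ leaves, so it satisfies the constraint.
\item Each $v\in S$ has at most $3$ neighbours in $G$ and $k-3$ neighbours $p_{v,r}$, hence at most $k$ neighbours outside $D$.
\end{itemize}
This is exactly why $k-3$ copies are attached and the maximum degree is $3$.

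\textbf{Converse direction.} Let $D$ be a $k$-limited dominating set of $G'$ with $|D|\le \ell+(k-3)n$. By Lemma~\ref{1alivec}, every $H_v^r$ contains at least one vertex of $D$. Define the excess $e_{v,r}=|D\cap V(H_v^r)|-1\ge 0$. Then
$$|D\cap V(G)|+\sum_{v,r}e_{v,r}\le \ell.$$
Build $S\subseteq V(G)$ by taking $D\cap V(G)$ and adding $v$ whenever some $H_v^r$ has $e_{v,r}\ge 1$. Clearly $|S|\le \ell$.

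It remains to show that $S$ dominates $G$. Take $v\in V(G)$ with $N_G[v]\cap S=\emptyset$. Then $v\notin D$ and no neighbour of $v$ in $G$ lies in $D$. So $v$ must be dominated in $G'$ by some $p_{v,r}\in D$. By the last part of Lemma~\ref{1alivec}, $|D\cap V(H_v^r)|\ge 2$, so $e_{v,r}\ge 1$ and $v\in S$, a contradiction. Hence $S$ is a dominating set of $G$ of size at most $\ell$.

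The main obstacle is this final step: making sure that a vertex of $G$ dominated only through the gadget is charged to an excess vertex in that gadget. Lemma~\ref{1alivec} handles it directly. The rest is bookkeeping of the degree bound in the forward direction.
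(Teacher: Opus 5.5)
Your proof is correct and follows the paper's argument. The forward direction uses the same set $S\cup\{q_{v,r}\}$, and the converse uses the same charging: each vertex of $G$ dominated only through some $p_{v,r}$ is charged to the second vertex of $D$ that Lemma~\ref{1alivec} forces into $H_v^r$. The only difference is cosmetic: you add every vertex whose gadget has positive excess, so the size bound is immediate and domination must be checked, while the paper adds exactly the set $A$ of undominated vertices, so domination is immediate and the size bound must be checked.
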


\begin{proof}
Suppose first that $G$ has a dominating set $D$ with $|D|\le \ell$. We construct a set $D'\subseteq V(G')$ as 
$$D':=D\cup \{q_{v,r} \; ; \; v\in V(G), \ r\in[k-3] \}.$$ 
Then $|D'|=|D|+(k-3)n\le \ell+(k-3)n$. We show that $D'$ is a $k$-limited dominating set of $G'$. 
Since $D$ dominates $G$, all original vertices are dominated by $D'$. Moreover, for every graph $H_v^r$, the vertex $q_{v,r}$ dominates $p_{v,r}$ and all vertices $s_{v,r}^i$. Hence, $D'$ is a dominating set of $G'$. It remains to verify the $k$-limited constraint. 
Let $u\in D'$. If $u\in D\subseteq V(G)$, then $u$ has at most 3 neighbors in $G$, since $\Delta(G)\le 3$, and exactly $k-3$ neighbors $p_{u,r}$. Therefore $|N_{G'}(u)-D'|\le 3+(k-3)=k$. If $u=q_{v,r}$ for some $v\in V(G)$ and $r\in [k-3]$, then $N_{G'}(u)-D'=\{p_{v,r} \}\cup \{s_{v,r}^i \, ; \; i\in [k-1] \}$, so $|N_{G'}(u)-D'|=k$.
Thus $D'$ is a $k$-limited dominating set of $G'$.

Conversely, suppose that $G'$ has a $k$-limited dominating set $D'$ with $|D'|\le \ell+(k-3)n$. Let $A=\{v\in V(G) \; ; \; v\notin N_G[D'\cap V(G)] \}$. That is, $A$ consists of all the original vertices that are not dominated by $D'\cap V(G)$ in $G$. We claim that $D:=(D'\cap V(G))\cup A$ is a dominating set of $G$ of size at most $\ell$. First, $D$ is clearly a dominating set of $G$. Indeed, if $v\in A$, then $v\in D$. Otherwise, $v\notin A$, and by the definition of $A$, the vertex $v$ belongs to $N_G[D'\cap V(G)]$, and hence is dominated by $D$. It remains to estimate the size of $D$. By Lemma~\ref{1alivec}, every graph $H_v^r$ contains at least one vertex from $D'$. Since there are exactly $k-3$ such graphs for every $v\in V(G)$, these vertices contribute at least $(k-3)n$ vertices to $D'$. Now let $v\in A$. Since $v$ is not dominated by $D'\cap V(G)$ in $G$, neither $v$ nor any neighbor of $v$ in $G$ belongs to $D'\cap V(G)$. Since $D'$ dominates $G'$, vertex $v$ must be dominated by one of the vertices $p_{v,r}$ for some $r\in[k-3]$. Hence, for every $v\in A$ there exists $r\in [k-3]$ such that $p_{v,r}\in D'$.
By Lemma~\ref{1alivec}, each such graph $H_v^r$ contains at least $2$ vertices of $D'$, whereas every graph $H_v^r$ contributes at least one vertex to $D'$. Hence, every vertex of $A$ contributes one additional vertex to $D'$ beyond the mandatory contribution.
Consequently, $|A|\le |D'|-|D'\cap V(G)|-(k-3)n$, and therefore
\begin{align*}
|D|
&= |(D'\cap V(G))\cup A| \le |D'\cap V(G)|+|A| \\
&\le |D'\cap V(G)|+|D'|-|D'\cap V(G)|-(k-3)n \\
&= |D'|-(k-3)n \le \ell.
\end{align*}
Thus, $G$ has a dominating set of size at most $\ell$, completing the proof.
\end{proof}

The \textsc{$k$-Limited Dominating Set} problem belongs to $\mathsf{NP}$, since for a given set $D$ we can verify in polynomial time that $D$ is a dominating set and that it satisfies the $k$-limited constraint.
Since the \textsc{Dominating Set} problem is $\mathsf{NP}$-complete for planar graphs of maximum degree at most $3$, Proposition~\ref{NP-ravninski} yields the following theorem.

\begin{theorem} \label{NP-ravninski-k5}
For every $k\ge 5$, the \textsc{$k$-Limited Dominating Set} problem is $\mathsf{NP}$-complete even when restricted to planar graphs.
\end{theorem}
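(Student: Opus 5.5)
The theorem follows from Proposition~\ref{NP-ravninski} together with two routine checks: membership in $\mathsf{NP}$, and that the construction of $G'$ is a polynomial-time reduction which preserves planarity.

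\emph{Membership in $\mathsf{NP}$.} Given a candidate set $D\subseteq V(G)$, we check two things in $O(|V(G)|+|E(G)|)$ time:
\begin{itemize}
\item that every vertex outside $D$ has a neighbor in $D$;
\item that every $u\in D$ satisfies $|N(u)-D|\le k$.
\end{itemize}
So $D$ itself serves as the certificate.

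\emph{Hardness.} Fix $k\ge 5$ and reduce from \textsc{Dominating Set} on planar graphs of maximum degree at most $3$, which is $\mathsf{NP}$-complete \cite{gareyjohnson-knjiga}. Given an instance $(G,\ell)$, build $G'$ by attaching the $k-3$ pendant trees $H_v^r$ to each $v\in V(G)$, and output the instance $(G',\ell+(k-3)n)$.

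\begin{itemize}
\item \emph{Size.} Since $k$ is a fixed constant, $G'$ has $n+(k+1)(k-3)n=O(n)$ vertices and is built in linear time.
\item \emph{Planarity.} Fix a planar embedding of $G$. Each $H_v^r$ is a tree joined to $G$ only through the edge $vp_{v,r}$, so it can be drawn inside a small disk around $v$ lying in one face incident to $v$. Hence $G'$ is planar.
\item \emph{Correctness.} By Proposition~\ref{NP-ravninski}, $G$ has a dominating set of size at most $\ell$ if and only if $G'$ has a $k$-limited dominating set of size at most $\ell+(k-3)n$.
\end{itemize}

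The only point that needs a little care is that $k$ is fixed rather than part of the input. This keeps the reduction polynomial, and it guarantees $k-3\ge 2$, so the gadget is well defined; the degree bound $\Delta(G)\le 3$ is already used inside the proposition. Combined with Corollary~\ref{cor:NP-planar-k234} for $1\le k\le 4$, this covers every positive integer $k$.
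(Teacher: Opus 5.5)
Your proposal is correct and follows the paper's argument: membership in $\mathsf{NP}$ by direct verification, and hardness by the reduction from \textsc{Dominating Set} on planar graphs of maximum degree at most $3$, using the pendant-tree construction $G'$ and Proposition~\ref{NP-ravninski}. One minor remark: the gadget is already well defined once $k\ge 4$, so $k-3\ge 2$ is not what matters here; $k\ge 5$ is simply the range not already covered by Corollary~\ref{cor:NP-planar-k234}.
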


Combining Corollaries~\ref{cor:NP-planar-k1} and~\ref{cor:NP-planar-k234} with Theorem~\ref{NP-ravninski-k5}, we obtain the following result.

\begin{corollary}
For every integer $k\ge 1$, the \textsc{$k$-Limited Dominating Set} problem is $\mathsf{NP}$-complete even when restricted to planar graphs.
\end{corollary}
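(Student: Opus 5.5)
The corollary follows by a case split on $k$, citing the three earlier results; no new construction is needed. Fix an integer $k\ge 1$. Membership in $\mathsf{NP}$ holds for every $k$ and every input class, including planar graphs. Given a candidate set $D$, we check in polynomial time that every vertex outside $D$ has a neighbor in $D$ and that every $u\in D$ satisfies $|N(u)-D|\le k$.

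\emph{Case $k=1$.} Corollary~\ref{cor:NP-planar-k1} applies directly. The vertex-cover reduction replaces each edge by the planar gadget $G^{ij}$, so it maps planar instances to planar instances. Since \textsc{Vertex Cover} is $\mathsf{NP}$-complete on planar graphs, hardness transfers.

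\emph{Case $2\le k\le 4$.} Corollary~\ref{cor:NP-planar-k234} applies. It rests on the observation that the reduction of \cite{sandra} preserves planarity for these values of $k$.

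\emph{Case $k\ge 5$.} Theorem~\ref{NP-ravninski-k5} applies. It comes from Proposition~\ref{NP-ravninski}, which attaches $k-3$ pendant trees $H_v^r$ to each vertex of a planar graph of maximum degree at most $3$. Attaching trees keeps the graph planar, and the instance $(G',\ell+(k-3)n)$ is built in polynomial time.

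These three ranges cover all positive integers $k$, so the statement follows. The only step that is not a direct citation is the planarity claim for $2\le k\le 4$, which the paper asserts by inspecting \cite{sandra}; I take it as given. If it needed independent support, my first attempt would be to redo the $k\ge 5$ construction with fewer pendant trees, though this would need checking before relying on it.
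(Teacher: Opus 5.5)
Your proposal is correct and follows the paper's argument: the paper obtains the corollary by combining Corollary~\ref{cor:NP-planar-k1}, Corollary~\ref{cor:NP-planar-k234} (which relies on the planarity of the reduction in \cite{sandra} for $2\le k\le 4$) and Theorem~\ref{NP-ravninski-k5}, with membership in $\mathsf{NP}$ checked directly. Your closing fallback is not needed, and note that the pendant-tree idea cannot reach $k=2$: there $k-3<0$, and a vertex of degree $3$ in $G$ placed in the dominating set may have three neighbors outside it.
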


\section{The $k$-limited domination number of Sierpi\'nski graphs}

In the previous section, we proved that the \textsc{$k$-Limited Dominating Set} problem remains $\mathsf{NP}$-complete even when restricted to planar graphs. It is therefore natural to investigate the corresponding parameter on well-known graph families. In this section, we determine the $1$-limited domination number of Sierpi\'nski graphs. We then focus on the planar family $S(n,3)$ and determine its $k$-limited domination number for both possible values of $k$.

\begin{theorem}\label{sierpinskik1}
Let $n\ge 1$ and $m\ge 2$. Then $$\gamma_1^L(S(n,m))=(m-1)\cdot m^{n-1}.$$
\end{theorem}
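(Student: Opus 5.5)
Since $S(n,m)$ is partitioned into exactly $m^{n-1}$ base cliques, each a copy of $K_m$, the value $(m-1)m^{n-1}$ equals $m-1$ vertices per base clique. I will prove the lower bound by showing that every $1$-limited dominating set $D$ contains at least $m-1$ vertices of every base clique, and the upper bound by constructing a set that contains exactly $m-1$ vertices of every base clique.

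For the lower bound, fix a base clique $Q$ and suppose $|D\cap V(Q)|\le m-2$. Then at least two vertices $u,w$ of $Q$ lie outside $D$. If some vertex $x\in D\cap V(Q)$ exists, then $u,w\in N(x)-D$, which violates the $1$-limited constraint. Hence $D\cap V(Q)=\emptyset$, so every vertex of $Q$ must be dominated from outside $Q$. Each vertex of a base clique has at most one neighbor outside its clique, because vertices of $S(n,m)$ have degree $m-1$ or $m$ and $m-1$ of those neighbors lie in the clique. So every vertex of $Q$ has exactly one external neighbor, and that neighbor lies in $D$. In particular, the extreme vertices $i^n$, which have no external neighbor, cannot lie in $Q$. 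Now take such an external neighbor $y\in D$, lying in a base clique $Q'\ne Q$. Since $y\in D$ and $Q'$ must be analysed as well, I split into two cases. If $Q'$ also contains no vertex of $D$, we have a contradiction, because $y\in D\cap V(Q')$. If $Q'$ contains a vertex of $D$, then by the argument above $|D\cap V(Q')|\ge m-1$ or $Q'\cap D=\emptyset$; here the first holds. In that case $y$ has its unique external neighbor in $Q$ outside $D$, so all clique-mates of $y$ must be in $D$. This is consistent, so no contradiction arises yet. The required step is a charging argument: every clique $Q$ with $D\cap V(Q)=\emptyset$ forces its $m$ external neighbors, which lie in $m$ distinct cliques, to be in $D$, and each of those cliques is then entirely contained in $D$. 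Each such fully-$D$ clique has $m$ vertices, one more than the target. A fully-$D$ clique $Q'$ can be adjacent to several empty cliques, so the charging must be done with care. I would set up the inequality $\sum_Q (|D\cap V(Q)|-(m-1))\ge 0$ by sending, for each empty clique $Q$, a charge of $1/m$ from each of its $m$ neighboring full cliques. A full clique receives a request from at most $m$ empty cliques, at most one per vertex, so it can pay at most $m\cdot(1/m)=1$, which is exactly its surplus. Each empty clique has a deficit of $m-1$ but receives only total $1$, which is too little. This naive charging therefore fails for $m\ge 3$. I suspect the real argument is instead that empty cliques are impossible altogether, and I will look for a contradiction through the $1$-limited constraint on vertices of the full clique $Q'$. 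I expect this part, handling base cliques with no vertex of $D$, to be the main obstacle. The cleaner fallback is induction on $n$ using the $m$ copies of $S(n-1,m)$, combined with the recursive structure of the linking edges.

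For the upper bound, I construct $D$ explicitly by omitting from each base clique a single vertex, chosen so that the omitted vertices form an independent set. Since each vertex of $D$ then has at most one non-$D$ neighbor inside its clique, I also need every external neighbor of a vertex of $D$ to lie in $D$. The remedy is to choose the omitted vertices among the vertices whose external neighbor is also omitted, that is, to omit both endpoints of a perfect matching of the linking edges, together with extreme vertices. Concretely, I would omit in each base clique $w_1\dots w_{n-1}\ast$ the vertex whose last coordinate equals $w_{n-1}$, namely $w_1\dots w_{n-1}w_{n-1}$ (for $n=1$ omit any one vertex). The external neighbor of $w\,j$ with $j\ne w_{n-1}$ is $w_1\dots w_{n-2}\,j\,w_{n-1}$, which is in $D$ since its last coordinate differs from $j$. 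The omitted vertex $w_{n-1}w_{n-1}$ is dominated by its clique-mates. Every vertex of $D$ then has exactly one neighbor outside $D$, namely the omitted vertex of its own clique. This gives $(m-1)m^{n-1}$, and I would double-check the adjacency rule for the external neighbor.
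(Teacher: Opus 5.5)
Your upper bound is correct and is exactly the paper's construction: keep the vertices with $x_{n-1}\neq x_n$. Your check through the level-$(n-1)$ neighbour $w_1\dots w_{n-2}\,j\,w_{n-1}$ is right. Drop the opening remark that the omitted vertices are independent; e.g.\ $011$ and $100$ are both omitted in $S(3,3)$ and are adjacent.

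The gap is in the lower bound. You correctly reduce to the dichotomy $|D\cap V(Q)|\ge m-1$ or $D\cap V(Q)=\emptyset$. You also correctly show that an empty $Q$ forces the base cliques of its $m$ linking neighbours to lie entirely in $D$. But you then leave the empty case open, and your guess that empty base cliques cannot occur is false. In $S(3,3)$, let $D$ be the union of the six base cliques $00{*},02{*},10{*},11{*},21{*},22{*}$. This $D$ is a $1$-limited dominating set:
each vertex of $D$ has all its clique-mates in $D$ and at most one further neighbour, and the vertices $010,011,012,120,121,122,200,201,202$ of the three empty cliques are dominated by their linking neighbours $001,100,021,102,112,211,022,210,220\in D$. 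So empty cliques have to be counted, not excluded.

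Note that the paper's own proof handles this case by claiming that every base clique contains a vertex with no linking edge. For $n\ge 3$ this holds only for the $m$ base cliques containing extreme vertices; in $S(3,3)$ we have $010\sim 001$, $011\sim 100$, $012\sim 021$. So that shortcut is not available, and you were right to flag this step.

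The missing idea is to localise your charging to copies of $S(2,m)$, using only the level-$(n-1)$ linking edges. Let $n\ge 2$ and write an empty base clique as $uj{*}$ with $u$ a word of length $n-2$.
\begin{itemize}
\item For every $j'\neq j$, the only neighbour of $ujj'$ outside its clique is $uj'j$, so $uj'j\in D$.
\item Since $uj'j$ already has the neighbour $ujj'\notin D$, the whole clique $uj'{*}$ lies in $D$.
\item Thus the other $m-1$ base cliques of the copy of $S(2,m)$ with prefix $u$ are full. In particular, each such copy contains at most one empty base clique.
\end{itemize}
Hence each of the $m^{n-2}$ copies of $S(2,m)$ contains at least $m(m-1)$ vertices of $D$. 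Either no base clique in it is empty and each contributes at least $m-1$, or exactly one is empty and the other $m-1$ contribute $m$ each. Summing gives $|D|\ge (m-1)m^{n-1}$.

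In your charging language, an empty clique should draw its entire deficit $m-1$ from the $m-1$ full cliques of its own copy of $S(2,m)$. Each of these has surplus exactly $1$ and is charged at most once. Your scheme failed only because it spread each full clique's surplus over all empty cliques it might border, instead of using that at most one of them lies in the same copy.
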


\begin{proof}
If $n=1$, then $S(1,m)\cong K_m$, and therefore $\gamma_1^L(S(1,m))=m-1$. Hence, we may assume that $n\ge 2$. 

We first show the upper bound. Let $D=\{x_1x_2\dots x_n\in V(S(n,m)) \, ; \, x_{n-1}\neq x_n \}$. Then, in each copy of the base clique in $S(n,m)$, exactly one vertex does not belong to $D$. Hence $|D|=(m-1)\,m^{n-1}$. We show that $D$ is a $1$-limited dominating set of $S(n,m)$. Clearly, every vertex not belonging to $D$ has a neighbor in $D$ in the same copy of the base clique, so $D$ is a dominating set of $S(n,m)$. Let now $u\in D$. Since exactly one vertex from the copy of the base clique containing $u$ does not belong to $D$, $u$ has exactly one neighbor outside $D$ in this copy. Moreover, by the definition of Sierpi\'nski graphs, if a linking neighbor $u'$ of $u$ exists, then $u'\in D$. Indeed, every linking edge of $S(n,m)$ is between vertices $xijj\dots j$ and $xjii\dots i$ for some prefix $x$ (possibly empty) and distinct symbols $i$ and $j$. Hence, if one endpoint of such an edge belongs to $D$, then its last two coordinates are distinct, and the same is true for the other endpoint. Thus, the other endpoint also belongs to $D$.
Therefore $|N(u)-D|\le 1$. Thus, $D$ is a $1$-limited dominating set of $S(n,m)$, and consequently $\gamma_1^L(S(n,m))\le (m-1)\,m^{n-1}$. 

Next we show the lower bound. Let $D$ be a $1$-limited dominating set of $S(n,m)$. Every base clique in $S(n,m)$ contains at least one vertex from $D$. Indeed, every base clique contains a unique vertex that is not incident with any linking edge. If a base clique contained no vertex from $D$, then this vertex would have no neighbor in $D$, contradicting the fact that $D$ is a dominating set. Moreover, since every base clique in $S(n,m)$ is isomorphic to $K_m$, at most one vertex of a base clique can lie outside $D$. Indeed, if two vertices $u,v\notin D$ belong to the same base clique, then for any vertex $w\in D$ in that clique we have $|N(w)-D|\ge 2$, which contradicts the $1$-limited constraint. Hence, each base clique of $S(n,m)$ contains at least $m-1$ vertices from $D$. Since $S(n,m)$ contains $m^{n-1}$ base cliques, we obtain $|D|\ge (m-1)\,m^{n-1}$. Therefore, $\gamma_1^L(S(n,m))\ge (m-1)\,m^{n-1}$. This completes the proof.
\end{proof}

We now focus on the planar family $S(n,3)$ and determine its $k$-limited domination number.

\begin{theorem}\label{sierpinskisn3}
Let $n\ge 1$ and $k\in\{1,2\}$. Then $$\gamma_k^L(S(n,3))=(3-k)\cdot 3^{n-1}.$$
\end{theorem}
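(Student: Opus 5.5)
The case $k=1$ is immediate from Theorem~\ref{sierpinskik1} with $m=3$, which gives $\gamma_1^L(S(n,3))=2\cdot 3^{n-1}=(3-1)\cdot 3^{n-1}$. So the work is for $k=2$, where we must show $\gamma_2^L(S(n,3))=3^{n-1}$. Note that $S(n,3)$ has maximum degree $3$. Every vertex of degree $3$ lies in a base triangle and has one linking neighbour, while the three extreme vertices $ii\dots i$ have degree $2$.

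\textbf{Lower bound.} We argue by base cliques, as in the proof of Theorem~\ref{sierpinskik1}. Each of the $3^{n-1}$ base triangles contains a vertex with no linking edge: for $n\ge 2$, the vertex $x_1\dots x_{n-1}x_n$ with $x_n=x_{n-1}$. Its neighbourhood lies entirely inside its base triangle, so to be dominated it needs a vertex of $D$ in that triangle. The base triangles partition $V(S(n,3))$, so $|D|\ge 3^{n-1}$. This bound holds for any dominating set, so the $2$-limited constraint is not even needed here. For $n=1$ we have $\gamma_2^L(K_3)=1$ directly.

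\textbf{Upper bound.} We need a $2$-limited dominating set with exactly one vertex in each base triangle. A vertex of degree $3$ in $D$ has at most $3$ outside neighbours, so the constraint fails only if it and both its triangle-mates and its linking neighbour are all outside $D$. Hence it suffices to choose one vertex per base triangle so that
\begin{enumerate}
\item every chosen vertex of degree $3$ has its linking neighbour also in $D$;
\item every vertex is dominated.
\end{enumerate}
Domination within triangles is automatic once each triangle contains a chosen vertex. A natural candidate is $D=\{x_1\dots x_{n-1}x_n : x_n = \sigma(x_{n-1})\}$, where $\sigma$ is a fixed-point-free permutation of $\{0,1,2\}$; for example, take the last symbol to be $x_{n-1}+1 \pmod 3$. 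The chosen vertex $w\,x_{n-1}\,x_n$ with $x_n\neq x_{n-1}$ has linking neighbour obtained by the rule $xijj\dots j \leftrightarrow xjii\dots i$. It then has to be checked whether that neighbour also satisfies $x'_n=\sigma(x'_{n-1})$. For the edge $x\,i\,j\,j\dots j$ with two or more trailing $j$'s, the endpoint ends in $jj$ and is therefore not of this form. So the cyclic rule fails in general, and we need something adaptive.

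\textbf{Main obstacle and plan.} The real difficulty is that every chosen vertex of degree $3$ must be matched with a chosen linking neighbour, so $D$ restricted to linking edges must form a perfect pairing. Equivalently, we need a set of linking edges such that each base triangle is incident with exactly one of them, plus handling of extreme vertices of degree $2$, which may be chosen unmatched. I would build this recursively. Let $D_n$ be such a set in $S(n,3)$ in which, in addition, we record which extreme vertices are chosen. Then $S(n+1,3)$ consists of three copies joined by three linking edges between extreme vertices of the copies. We prescribe a boundary pattern so that the extreme vertices incident with the new linking edges are either both chosen (forming a matched pair) or both unchosen (so the unchosen one is dominated inside its own triangle). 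Concretely, I would try to show by induction that, for any $n$, there exist solutions in which the set of chosen extreme vertices is any prescribed subset of size $0$, or alternatively of size exactly $3$. Then I would combine three copies, using rotations or reflections of $S(n,3)$ to align the patterns. For the base case $S(2,3)$, the three base triangles can be paired via two linking edges, with the third triangle choosing its extreme vertex. Verifying that the needed boundary types close under the recursion is the step I expect to require case analysis.
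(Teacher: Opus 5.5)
Both halves of your $k=2$ argument have a genuine gap.

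\textbf{Lower bound.} Your claim that the vertex $w\,a\,a$ of each base triangle has no linking edge is false for $n\ge 3$. As your own opening remark indicates, only the three extreme vertices $i^n$ have degree $2$. Every other vertex has exactly one linking neighbour: if $u$ ends in a maximal run $j^r$ preceded by $i\neq j$, the neighbour is obtained by replacing $i\,j^r$ with $j\,i^r$. For example, $011\sim 100$ in $S(3,3)$, and your upper-bound paragraph itself notes that such endpoints end in $jj$. So there are $3^{n-1}$ triangles but only three vertices without a linking edge.

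Your assertion that every dominating set has at least $3^{n-1}$ vertices is therefore wrong. The set $\{000,011,022,110,120,210,220\}$ is a perfect code of $S(3,3)$, so $\gamma(S(3,3))=7<9$, and the triangle $\{100,101,102\}$ contains none of its vertices. The paper's proof uses exactly the same base-clique argument, both here and in Theorem~\ref{sierpinskik1}, on which you and the paper rely for $k=1$. It has the same defect for $n\ge 3$; the value for $k=1$ is still correct, but it needs a different argument.

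For $k=2$ the $2$-limited constraint is essential, and a simple count suffices. Every vertex outside $D$ has a neighbour in $D$, and every $u\in D$ has at most two neighbours outside $D$, so
\[
3^n-|D| = |V(S(n,3))-D| \le \sum_{u\in D}|N(u)-D| \le 2|D|,
\]
that is, $|D|\ge 3^{n-1}$.

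\textbf{Upper bound.} You never produce a set: the cyclic rule fails, and the recursive boundary-pattern scheme is left unverified. The missing observation is one you nearly state. A linking edge $x\,i\,j^r\sim x\,j\,i^r$ with $r\ge 1$ joins two vertices whose last two coordinates are both distinct (if $r=1$) or both equal (if $r\ge 2$). Hence the choice you excluded, $\sigma=\mathrm{id}$, works. The set $D=\{x_1\dots x_n : x_{n-1}=x_n\}$, which is the paper's set, has one vertex in each base triangle and is closed under taking linking neighbours. So every $u\in D$ has exactly its two triangle-mates outside $D$. This is precisely the perfect pairing you were after, with the three extreme vertices as the only unmatched chosen vertices. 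No fixed-point-free $\sigma$ and no induction are needed.
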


\begin{proof}
For $k=1$, the result follows immediately from Theorem~\ref{sierpinskik1} by setting $m=3$. Indeed,
$\gamma_1^L(S(n,3))=(3-1)\cdot 3^{n-1}=2\cdot 3^{n-1}$.

It remains to consider the case $k=2$. If $n=1$, then $S(1,3)\cong K_3$, and therefore $\gamma_2^L(S(1,3))=1$. Hence, we may assume that $n\ge 2$.

To prove the upper bound, let $D=\{x_1x_2\dots x_n\in V(S(n,3)) \, ; \, x_{n-1}=x_n \}$.
Then $|D|=3^{n-1}$. We show that $D$ is a $2$-limited dominating set of $S(n,3)$. Since every base clique of $S(n,3)$ is isomorphic to $K_3$, the one vertex from that clique that is in $D$ dominates the other two, so $D$ is a dominating set of $S(n,3)$. Let now $u\in D$. Then $u$ has two neighbors in the same base clique that are outside $D$. Moreover, by the definition of Sierpi\'nski graphs, if a linking neighbor $u'$ of $u$ exists, then $u'\in D$. Indeed, every linking edge of $S(n,3)$ is between vertices $xijj\dots j$ and $xjii\dots i$ for some prefix $x$ (possibly empty) and distinct symbols $i$ and $j$. Hence, if one endpoint of such an edge belongs to $D$, then its last two coordinates are equal, and the same is true for the other endpoint. Thus, the other endpoint also belongs to $D$. Therefore $|N(u)-D|\le 2$. Thus, $D$ is a $2$-limited dominating set of $S(n,3)$, and consequently $\gamma_2^L(S(n,3))\le 3^{n-1}$.

Next we show the lower bound. Let $D$ be a $2$-limited dominating set of $S(n,3)$. As in the proof of Theorem~\ref{sierpinskik1}, every base clique contains at least one vertex from $D$, since otherwise its unique vertex not incident with any linking edge would not be dominated. Since $S(n,3)$ contains $3^{n-1}$ base cliques, it follows that $|D|\ge 3^{n-1}$. Therefore, $\gamma_2^L(S(n,3))\ge 3^{n-1}$.

Combining the upper and lower bound yields $\gamma_2^L(S(n,3))=3^{n-1}$.
\end{proof}

\begin{figure}[ht!]
\begin{center}
    \begin{subfigure}{0.45\textwidth}
    \centering
        \begin{tikzpicture}[scale=0.7]
            \node [My Style, name=v1, label=below left:$11$ ]   at (0,0) {};
            \node [My Style2, name=v2, label=below left:$12$ ]   at (2,0) {};
            \node [My Style2, name=v3, label=below left:$21$ ]   at (3,0) {};
            \node [My Style, name=v4, label=below left:$22$ ]   at (5,0) {};
            \node [My Style2, name=v5, label=left:$10$ ]   at (1,1.73) {};
            \node [My Style2, name=v6, label=right:$20$ ]   at (4,1.73) {};
            \node [My Style2, name=v7, label=left:$01$ ]   at (1.5,2.56) {};
            \node [My Style2, name=v8, label=right:$02$ ]   at (3.5,2.56) {};
            \node [My Style, name=v9, label=above:$00$ ]   at (2.5,4.29) {};

            \draw (v1)--(v2)--(v3)--(v4)--(v6)--(v8)--(v9)--(v7)--(v5)--(v1);
            \draw (v5)--(v2);
            \draw (v6)--(v3);
            \draw (v7)--(v8);
        \end{tikzpicture}
    \caption{A $1$-limited dominating set.}
    \label{fig:a}
    \end{subfigure}
    \hfill
    \begin{subfigure}{0.45\textwidth}
    \centering
        \begin{tikzpicture}[scale=0.7]
            \node [My Style2, name=v1, label=below left:$11$ ]   at (0,0) {};
            \node [My Style, name=v2, label=below left:$12$ ]   at (2,0) {};
            \node [My Style, name=v3, label=below left:$21$ ]   at (3,0) {};
            \node [My Style2, name=v4, label=below left:$22$ ]   at (5,0) {};
            \node [My Style, name=v5, label=left:$10$ ]   at (1,1.73) {};
            \node [My Style, name=v6, label=right:$20$ ]   at (4,1.73) {};
            \node [My Style, name=v7, label=left:$01$ ]   at (1.5,2.56) {};
            \node [My Style, name=v8, label=right:$02$ ]   at (3.5,2.56) {};
            \node [My Style2, name=v9, label=above:$00$ ]   at (2.5,4.29) {};

            \draw (v1)--(v2)--(v3)--(v4)--(v6)--(v8)--(v9)--(v7)--(v5)--(v1);
            \draw (v5)--(v2);
            \draw (v6)--(v3);
            \draw (v7)--(v8);
        \end{tikzpicture}
    \caption{A $2$-limited dominating set.}
    \label{fig:b}
    \end{subfigure}
    \hfill
		



\end{center}

\caption{The Sierpi\'nski graph $S(2,3)$ with examples of $1$-limited and $2$-limited dominating sets.}
\label{s23}
\end{figure}

Figure \ref{s23} illustrates the constructions used in the proofs of Theorems \ref{sierpinskik1} and \ref{sierpinskisn3} on the graph $S(2,3)$. The square vertices form a minimum $1$-limited dominating set in Figure~\ref{fig:a} and a minimum $2$-limited dominating set in Figure~\ref{fig:b}.


\section{Conclusion}

In this paper, we prove that the \textsc{$1$-Limited Dominating Set} problem is $\mathsf{NP}$-complete. We also prove that, for every positive integer $k$, the \textsc{$k$-Limited Dominating Set} problem is $\mathsf{NP}$-complete even when restricted to planar graphs. A natural direction for further research is to investigate the complexity of the problem on other restricted graph families. On the structural side, our results on Sierpi\'nski graphs suggest the following conjecture.

\begin{conjecture}
Let $n$, $m$, and $k$ be integers such that $n\ge 1$ and $1\le k\le m-1$. Then
$$\gammakL(S(n,m))=(m-k)\, m^{n-1}.$$
\end{conjecture}

The conjecture, if true, would completely determine the $k$-limited domination number of Sierpi\'nski graphs.


\vskip 1pc \noindent{\bf Acknowledgments.} 
This work has been supported by the European Commission’s Horizon Europe Research and Innovation programme through the Marie Sk{\l}odowska-Curie Actions Staff Exchanges (MSCA-SE) under Grant Agreement no. 101182819 (COVER: \allowbreak\ (C)ombinatorial (O)ptimization for (V)ersatile Applications to (E)merging u(R)ban Problems). 
The author was partially supported by the Slovenian Research and Innovation Agency (ARIS), program P2--0065 and project J1--70016.

\bibliographystyle{plain}
\bibliography{DG-references}

\end{document}